\documentclass[11pt,leqno]{amsart}
\usepackage[margin=1in]{geometry}
\usepackage[T1]{fontenc}
\usepackage{lmodern}
\usepackage{amsmath,amssymb,amsthm,mathtools}
\usepackage{microtype}
\usepackage{enumitem}
\usepackage{aliascnt}
\usepackage[hidelinks]{hyperref}
\usepackage[capitalize,noabbrev]{cleveref}

\numberwithin{equation}{section}
\newtheorem{theorem}{Theorem}[section]
\newaliascnt{lemma}{theorem}
\newtheorem{lemma}[lemma]{Lemma}
\aliascntresetthe{lemma}
\newaliascnt{proposition}{theorem}
\newtheorem{proposition}[proposition]{Proposition}
\aliascntresetthe{proposition}
\newaliascnt{corollary}{theorem}

\aliascntresetthe{corollary}
\theoremstyle{definition}
\newaliascnt{example}{theorem}

\aliascntresetthe{example}
\newaliascnt{definition}{theorem}

\aliascntresetthe{definition}
\theoremstyle{remark}
\newaliascnt{remark}{theorem}
\newtheorem{remark}[remark]{Remark}
\aliascntresetthe{remark}
\crefname{lemma}{Lemma}{Lemmas}
\crefname{proposition}{Proposition}{Propositions}
\crefname{corollary}{Corollary}{Corollaries}
\crefname{definition}{Definition}{Definitions}
\crefname{example}{Example}{Examples}
\crefname{remark}{Remark}{Remarks}

\newcommand{\R}{\mathbb{R}}
\newcommand{\Leb}{\mathcal{L}}
\newcommand{\lip}[2]{[#1]_{C^{0,1}(#2)}}

\newcommand{\dd}{\,\mathrm{d}}

\newcommand{\cA}{\mathcal A}
\setlist[enumerate]{label=\textnormal{(\roman*)},leftmargin=*,itemsep=3pt,topsep=5pt}
\hypersetup{
 pdftitle={A sharp reverse isoperimetric inequality for the planar Monge--Amp\`ere eigenvalue},
 pdfsubject={A sharp lower bound for the Monge--Ampere eigenvalue at fixed area},
 pdfauthor={},
 pdfkeywords={Monge-Ampere eigenvalue, reverse isoperimetric inequality, triangle, partial Legendre transform}
}

\title[Triangles minimize the Monge--Amp\`ere eigenvalues]
{Triangles minimize the Monge--Amp\`ere eigenvalue among planar bounded convex domains of a given area}
\author{Chong Gu}
\address{Department of Mathematics, Indiana University, Bloomington, IN 47405, USA}
\email{chongu@iu.edu}
\subjclass[2020]{35J96, 35P30, 47A75.}
\keywords{Monge--Amp\`ere eigenvalue, minimizer, reverse isoperimetric inequality, Legendre transform, convex envelope}
\thanks{The author was supported in part by the National Science Foundation under grant DMS-2452320.}

\begin{document}

\begin{abstract}
We prove that triangles minimize the Monge--Amp\`ere eigenvalue among bounded planar convex domains of a given area, thus resolving the two-dimensional case of Le's simplex conjecture for the Monge--Amp\`ere eigenvalue (The eigenvalue problem for the Monge--Amp\`ere operator on general bounded convex domains, {\it Ann. Sc. Norm. Super. Pisa Cl. Sci.} (5) {\bf 18} (2018)). The proof uses a neighbor-decreasing property of the Monge--Amp\`ere eigenvalues of convex polygons along vertex elimination parallel chord movements. 
\end{abstract}
\maketitle

\section{Introduction and statement of the main result}\label{sec:framework}
In this paper, we prove that triangles minimize the Monge--Amp\`ere eigenvalue among bounded planar convex domains of a given area, thus resolving the two-dimensional case of Le's simplex conjecture for the Monge--Amp\`ere eigenvalue \cite[Conjecture~1.5(i)]{Le2018}. 

\medskip

The Monge--Amp\`ere eigenvalue problem on a bounded convex domain $\Omega \subset \R^n (n\geq 2)$
\begin{equation}
\label{MAp}
\det D^2 u = \lambda|u|^n \quad \text{in } \Omega, \qquad u = 0 \quad \text{on } \partial \Omega
\end{equation}
was first studied by 
Lions \cite{Lions1985} for the case of smooth, uniformly convex domains. 
He showed that there is a unique positive constant $\lambda=\lambda(\Omega)$ for which \eqref{MAp}
has a nonzero convex solution $u \in C^\infty(\Omega)\cap C^{1,1}(\overline{\Omega})$. The Monge--Amp\`ere eigenfunction $u$ is unique up to multiplication by positive constants. The global smoothness $u\in C^{\infty}(\overline{\Omega})$ was obtained by Hong--Huang--Wang \cite{HHW} in two dimensions and by Le--Savin \cite{LS} in all dimensions. 
Tso \cite{Tso1990} discovered a variational characterization of $\lambda(\Omega)$ via the Monge--Amp\`ere Rayleigh quotient.

\medskip
In \cite{Le2018},  Le extended the results of Lions and Tso to all general bounded convex domains $\Omega \subset \R^n$. He showed that 
there is a unique positive constant $\lambda=\lambda[\Omega]$ (called the Monge--Amp\`ere eigenvalue of $\Omega$) for which \eqref{MAp}
has a unique (up to multiplication by positive constants) nonzero convex solution $u \in C^\infty(\Omega)\cap C(\overline{\Omega})$. The function $u$ is called a  Monge--Amp\`ere eigenfunction of $\Omega$. As in \cite{Le2018}, the notation $\lambda[\Omega]$ for the Monge--Amp\`ere eigenvalue is used  to indicate that $\partial \Omega$ can possibly be nonsmooth and contain flat parts. Le also established the global Lipschitz regularity of the Monge--Amp\`ere eigenfunctions on general bounded convex domains \cite{Le2026}. Using this result, he showed that $\lambda[\Omega]$ can be characterized as the infimum of the Monge--Amp\`ere Rayleigh quotient
\[
\lambda[\Omega] = \inf \left\{ \frac{\int_\Omega |u| \det D^2 u \; dx}{\int_\Omega |u|^{n+1} \; dx}: u \in C^{0,1 }(\overline\Omega)\cap C^2(\Omega)\setminus \{0\}, u \text{ convex on } \Omega, u = 0 \text{ on } \partial \Omega\right\}.
\]
See also \cite[Chapter 11]{Le2024} for a self-contained exposition of the 
Monge--Amp\`ere eigenvalue problem.

\medskip

Salani~\cite{Salani2005} proved a Brunn--Minkowski inequality for the Monge--Amp\`ere eigenvalue on smooth uniformly convex domains.
Le~\cite[Theorem~1.3]{Le2018} extended this inequality to general bounded convex domains.
He later characterized the equality case~\cite[Theorem~1.3]{LeBM2026}.

\medskip
Brandolini--Nitsch--Trombetti~\cite{BNT2009} proved that ellipsoids maximize the Monge--Amp\`ere eigenvalue among smooth uniformly convex domains of a fixed volume.
Le~\cite[Theorem~1.4]{Le2018} extended this result to general bounded convex domains.

\medskip
Our problem here is to minimize $\lambda[\Omega]$ among bounded convex domains of a fixed volume.
Le~\cite[Theorem~1.4]{Le2018} proved that the minimum is attained.
He conjectured that simplices are minimizers~\cite[Conjecture~1.5(i)]{Le2018}.
We prove that triangles attain this minimum in the planar case, thus resolving Le's simplex conjecture for the Monge--Amp\`ere eigenvalue in the two-dimensional case.
Following \cite[Section~1.2]{Le2018}, we call the resulting lower bound a {\it reverse isoperimetric inequality for the Monge--Amp\`ere eigenvalue}.

\medskip
A triangle in this paper always means the interior of the convex hull of three noncollinear points in the plane $\R^2$. Our main result states as follows.

\begin{theorem}[Triangles minimize the Monge--Amp\`ere eigenvalue among bounded planar convex domains of a given area]\label{thm:main}
Let $\Omega\subset\R^2$ be a bounded convex domain with nonempty interior, and let $T\subset\R^2$ be a triangle with area $|T|=|\Omega|$. Then, their Monge--Amp\`ere eigenvalues satisfy
\begin{equation}
 \lambda[\Omega]\geq\lambda[T].
 \label{eq:main}
\end{equation}
\end{theorem}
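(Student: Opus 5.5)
We will prove \eqref{eq:main} by first reducing to convex polygons and then removing vertices one at a time, in such a way that the area is preserved and the eigenvalue does not increase.

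\textbf{Reduction to polygons.} The eigenvalue scales as $\lambda[tA\Omega+b]=|\det A|^{-2}t^{-4}\lambda[\Omega]$ in the plane. Hence the quantity $\lambda[\Omega]|\Omega|^2$ is affine invariant, and all triangles share the same value of it. So it suffices to show $\lambda[P]|P|^2\ge \lambda[T]|T|^2$ for every convex polygon $P$, and then pass to general $\Omega$. For the passage, we use continuity of $\lambda[\cdot]$ under Hausdorff convergence of convex domains (Le \cite{Le2018}) together with approximation of $\Omega$ by inscribed polygons. Continuity of the area under the same convergence then finishes this step.

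\textbf{Vertex elimination.} Let $P$ be a convex $N$-gon with $N\ge4$ and vertices $v_1,\dots,v_N$. Fix a vertex $v_i$, and move it parallel to the chord $v_{i-1}v_{i+1}$. This keeps the area fixed, since the triangle $v_{i-1}v_iv_{i+1}$ has constant base and height, so the motion is a parallel chord movement. We move $v_i$ until it becomes collinear with $v_{i-2}v_{i-1}$ or with $v_{i+1}v_{i+2}$, at which point the polygon has $N-1$ vertices. The claim to establish is that along this one-parameter family $P_s$ we can choose the direction so that $s\mapsto\lambda[P_s]$ is nonincreasing. This is the neighbor-decreasing property mentioned in the abstract.

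To get it, we want some form of concavity of $s\mapsto \lambda[P_s]^{-1/4}$ or a similar power, in the spirit of the Brunn--Minkowski inequality of Salani and Le. For parallel chord movements, consider the parametrized family in which each chord parallel to $v_{i-1}v_{i+1}$ is translated linearly in $s$; this is Shephard's shadow-system construction. Concavity along $s$ is then plausible if we pass to a suitable transform. We would first try a partial Legendre transform in the variable along the chord direction. After this transform, the eigenfunction of $P_s$ should yield a competitor on $P_{s'}$ that is linear-in-$s$ compatible, through the Rayleigh quotient characterization. Concavity in $s$ on the closed interval of admissible $s$ then gives a minimum at an endpoint, and both endpoints have fewer vertices. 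Iterating down to $N=3$ gives $\lambda[P]\ge\lambda[T']$ for a triangle $T'$ of equal area, hence \eqref{eq:main}.

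\textbf{Main obstacle.} The key step, and the one we expect to be hardest, is the monotonicity or concavity along the shadow system. The Monge--Amp\[e]re Rayleigh quotient $\int|u|\det D^2u/\int|u|^3$ is not obviously convex under chordwise shearing. Under the partial Legendre transform in $x_1$, however, the planar Monge--Amp\`ere equation becomes a linear divergence-type equation: $\det D^2u=f$ turns into $u^*_{x_2x_2}+f\,u^*_{pp}=0$. This linearization is what should make the dependence on the shear parameter tractable. We would verify concavity of an appropriate power of $\lambda$ by constructing, from the eigenfunctions at $s_0$ and $s_1$, a convex test function on $P_{(s_0+s_1)/2}$ via the transformed coordinates, that is, a convex-envelope or sup-convolution style interpolation. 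We would then estimate its quotient using the Lipschitz regularity of eigenfunctions from Le \cite{Le2026}, which justifies the Rayleigh characterization on polygons.
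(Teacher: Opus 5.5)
There is a genuine gap in the key step: the inequality you plan to prove points the wrong way. A concave function on an interval attains its \emph{minimum} at an endpoint. So concavity of $s\mapsto\lambda[P_s]^{-1/4}$, the Brunn--Minkowski-type property, would say that $\lambda[P_s]$ attains its \emph{maximum} at an endpoint, which is useless here. What you need is that $\lambda$ itself, or some increasing function of it, is concave or at least quasi-concave along the family. That is a \emph{lower} bound for $\lambda$ at interior parameters in terms of the endpoint values. Your construction in the last paragraph cannot give this. A convex test function on $P_{(s_0+s_1)/2}$ built from the eigenfunctions at $s_0,s_1$ only bounds $\lambda[P_{(s_0+s_1)/2}]$ from above, because a test function only yields an upper bound for the eigenvalue of the domain it lives on. The information has to flow the other way. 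From the eigenfunction $u$ at an interior parameter (say $t=0$), you must build competitors on \emph{every} $P_t$, including the endpoints, whose Rayleigh quotients are bounded by an affine function of $t$ that equals $\lambda[P_0]$ at $t=0$. Your phrase ``the eigenfunction of $P_s$ should yield a competitor on $P_{s'}$'' points at this, but you then abandon it for the midpoint interpolation.

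Making the correct direction work is the real content, and your proposal contains none of its mechanism. The sheared function $w_t(x,y)=u(x,y-t\eta(x))$ is not convex, so the paper replaces it by its convex envelope $U_t$ in $\overline{P_t}$. Since $U_t\le w_t\le0$, the $L^3$ mass does not drop. Since $U_t^*=w_t^*$, the energy can be read off from the dual formula $I[U;P]=3\int_{\R^2}(U^*-h_P)\dd\xi$. Slicing in $p$ and applying the one-dimensional inequality $\int(S_v-S_h)\dd q\le\frac12\int(v'^2-h'^2)\dd x$ (with equality for concave $v$) gives $I[U_t;P_t]\le\frac32\iint\bigl((F_t)_x^2-(H_t)_x^2\bigr)\dd x\dd p$. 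Here $F_t,H_t$ are the partial Legendre transforms, in the shear variable $y$, of $w_t$ and of the sections of $P_t$. The decisive point is that the shear acts on both in the same way: $F_t=V+tp\eta(x)$ and $H_t=H+tp\eta(x)$. The $t^2$ terms therefore cancel and the bound is exactly $E+tB$. Hence $\lambda[P_t]\le\lambda[P_0]+tB/M$, and one picks the endpoint on the side where $tB\le0$. This is concavity of $\lambda$ itself, not of $\lambda^{-1/4}$. It also relies on the energy identity, not on the partially Legendre-transformed PDE you cite, which is not linear here because $f=\lambda|u|^2$. The rest of your outline matches the paper: affine invariance, approximation by polygons with continuity of $\lambda$, and the vertex motion with its endpoints.
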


Triangles of the same area have the same  Monge--Amp\`ere eigenvalue by the affine invariance; see Lemma \ref{lem:affine}. The theorem proves that they all attain the minimum; it does not classify every minimizing domain. We conjecture that triangles are the only minimizers.

\medskip

We would like to mention some closely related results, not to be exhaustive. 
Ball's reverse isoperimetric inequality~\cite{Ball1991} is concerned with the surface area of convex bodies in $\R^n$. He showed that
every convex body has an affine image with surface area at most that of a regular simplex of the same volume; for centrally symmetric bodies, the comparison is with a cube of the same volume.
Ball's proof uses John's characterization of inscribed ellipsoids of maximal volume~\cite{John1948} and the Brascamp--Lieb inequality \cite{BL}.

\medskip
Mahler's conjecture \cite{M1, M2} predicts that simplices minimize the product of the volumes of a convex body and its polar.
The polar is taken about the Santal\'o point, which minimizes the polar volume over all interior points. The two-dimensional case was solved by Mahler himself \cite{M1}.
For centrally symmetric bodies, cubes are conjectured minimizers.
Iriyeh and Shibata~\cite{IS2020} proved the three-dimensional symmetric case.
Recent work of Chen--Li--Xi--Xu~\cite{CLXX2026} presents a proof of the general three-dimensional case.
They also determine the equality case.

\medskip
Bucur and Fragal\`a~\cite{BFS2016} studied the corresponding product of the first Dirichlet Laplacian eigenvalues of a convex body and its polar.
They proved that balls minimize this product among origin-symmetric convex bodies.
For planar bodies symmetric about both coordinate axes, they also obtained a comparison with the square.
Each such body has an invertible diagonal image whose eigenvalue product is no larger than that of a square.

\medskip
We say a few words on the proof of Theorem \ref{thm:main}. A bounded planar convex domain can be approximated by convex polygons in the Hausdorff distance. By the continuity of the Monge--Amp\`ere eigenvalue $\lambda[\Omega]$, it suffices to prove Theorem \ref{thm:main} for $\Omega$ being a polygon. For a polygon with at least four vertices, we move a vertex parallel to the segment joining its neighbors and keep other vertices fixed; the area is preserved and the polygon remains convex. This translation is similar to the parallel chord movement investigated by Campi--Gronchi \cite{CG} and has its root in the shadow system introduced by Rogers-Shephard \cite{RS}.  In our main neighbor-decreasing result, we will show that this vertex can be moved to obtain a convex polygon with fewer vertices and no larger Monge--Amp\`ere eigenvalue. Repeating this process gives a triangle that minimizes the Monge--Amp\`ere eigenvalue.

\medskip
More technical details of the proof are as follows.
\begin{enumerate} 
\item We first express the Monge--Ampe\`re energy $I[u;\Omega]= \int_\Omega |u|\, d\mu_u$ of a convex function $u\in C(\overline{\Omega})$ vanishing on the boundary $\partial\Omega$ via the integral $\int_{\R^n} (u^\ast-h_\Omega)\, dx$ of  its Legendre transform and the support function $h_\Omega$ of $\overline{\Omega}\subset\R^n$. See Lemma \ref{lem:full-dual}.
\item When $P\subset\R^2$ is a polygon in the plane,  $I[u;P]$ can be expressed via $\int_\R\int_l^r [F_x^2(x, p)-H_x^2(x, p)] \, dx\, dp$, which is an integral on $\R$ of a difference $\int_l^r [F_x^2(x, p)-H_x^2(x, p)] \, dx$ of the one-dimensional Dirichlet energies of the partial Legendre transform $F(x, p)$ of $u$ in $y$ and the support function $H(x, p)$ of $P$ in the $y$-direction. See Proposition \ref{prop:envelope-energy}.
\item Let $P\subset\R^2$ be a polygon with at least $m\geq 4$ vertices. We move a vertex of $P$ parallel to its nearest diagonal by an area-preserving transformation using a piecewise affine function $\eta$ via $(x, y)\mapsto (x, y+ t\eta(x))$ where $t_0\leq t\leq t_1$ and $0\in (t_0, t_1)$,  and obtain convex polygons $P_t$ with $P_{t_0}$ and $P_{t_1}$ have at most $(m-1)$ vertices. We show that either $P_{t_0}$ or $P_{t_1}$ has Monge--Amp\`ere eigenvalue not greater than that of $P$. To do this, let $u$ be a nonzero Monge--Amp\`ere eigenfunction of $P$. 
 From the variational characterization of $\lambda[P_t]$, we need to choose a suitable convex function $U_t$ vanishing on $\partial P_t$ and $t\in (t_0, t_1)$ such that $I[U_t; P_t]/\|U_t\|_{L^3(P_t)}^3\leq \lambda[P]$. A possible candidate for $U_t$ is  $w_t(x, y)=u(x, y-t\eta(x))$, but this function maybe nonconvex. However, choosing
$U_t$ as a {\it convex envelope} of $w_t$ on $P_t$ will work. In fact, we can show that $|U_t\|_{L^3(P_t)}^3\geq |u\|_{L^3(P)}^3$ and
$I[U_t; P_t] \leq I[u; P] + tb$ where $b$ is a fixed constant, independent of $t$. The linear change comes from using (ii), and the fact that the partial Legendre transform $F_t(x, p)$ of $w_t$ in $y$ and the support function $H_t(x, p)$ of $P_t$ in the $y$-direction change linearly in $t$. See Theorem \ref{thm:endpoint-comparison}.

\end{enumerate}

Throughout the paper, a convex domain in $\R^n$ is a bounded open convex set with nonempty interior. Polygons are open, and their vertices are the extreme points of their closures. We write $|E|=\Leb^n(E)$ for the Lebesgue measure of a measurable set $E\subset\R^n$ and $|z|$ for the Euclidean norm of $z\in\R^n$. 

\medskip
For a convex function $u$ on $\Omega$, its subdifferential and Monge--Amp\`ere measure are
\[
 \partial u(z)=\{\xi\in\R^n:u(w)\geq u(z)+\xi\cdot(w-z)
                  \text{ for all }w\in\Omega\},
 \qquad \mu_u(E)=|\partial u(E)|,
\]
where $E\subset\Omega$ is Borel and $\partial u(E)=\bigcup_{z\in E}\partial u(z)$. If $u\in C^2(\Omega)$, then $\mu_u=(\det D^2u)\dd \Leb^n$. See \cite[Chapter 3]{Le2024}.

\medskip
Let $\Omega\subset\R^n$ be a bounded convex domain. Set
\begin{equation}
 \cA(\Omega)=\{u\in C(\overline{\Omega})\setminus\{0\}:u\text{ is convex},\ u=0\quad\text{on }\partial\Omega\},
 \label{eq:admissible-energy}
\end{equation}
and define the Monge--Amp\`ere energy of $u\in \cA(\Omega)$ as 
\[
I[u;\Omega]=\int_\Omega |u|\dd\mu_u.
\]
The energy may be infinite.  By \cite[Theorem 1.1]{Le2018}, the Monge--Amp\`ere eigenvalue of $\Omega$ has the following variational characterization
\begin{equation}
 \lambda[\Omega]=\inf_{u\in\cA(\Omega)}
 \frac{I[u;\Omega]}{\|u\|_{L^{n+1}(\Omega)}^{n+1}},
 \label{eq:rayleigh}
\end{equation}
and every Monge--Amp\`ere eigenfunction attains the infimum. 

\medskip
The global Lipschitz regularity of the Monge--Amp\`ere eigenfunctions on general bounded convex domains \cite[Theorem 1.1]{Le2026} allows us to choose test functions in the variational characterization of the Monge--Amp\`ere eigenvalue in the class of Lipschitz functions. This substantially facilitates our analysis.

\medskip For $K\subset\R^n$ and a Lipschitz function $f\in C^{0, 1}(K)$, its Lipschitz constant is defined by
\[ [f]_{C^{0, 1}(K)}:= \sup_{x\neq y\in K} \frac{|f(x)-f(y)|}{|x-y|}.\]

We also recall the affine invariance of $\lambda[\Omega]$; see \cite[Proposition 5.8]{Le2018}.
\begin{lemma}[Affine invariance]\label{lem:affine}
Let $\Omega\subset\R^n$ be a bounded convex domain, $A$ be an $n\times n$ invertible matrix, and $b\in\R^n$. Then
\begin{equation}
 \lambda[A\Omega+b]=|\det A|^{-2}\lambda[\Omega].
 \label{eq:affine}
\end{equation}
Thus simplices of the same volume have the same Monge--Amp\`ere eigenvalue. In dimension two, $\lambda[a\Omega]=a^{-4}\lambda[\Omega]$ for $a>0$.
\end{lemma}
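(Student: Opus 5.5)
The plan is to prove the scaling identity directly from the variational characterization \eqref{eq:rayleigh}, by transporting admissible functions between $\Omega$ and $\Omega':=A\Omega+b$. Write $\Phi(z)=Az+b$. For $u\in\cA(\Omega)$ define $v=u\circ\Phi^{-1}$ on $\overline{\Omega'}$. Then $v$ is continuous and convex, since composition with an affine map preserves convexity. It vanishes on $\partial\Omega'=\Phi(\partial\Omega)$ and is nonzero, so $v\in\cA(\Omega')$. The correspondence $u\leftrightarrow v$ is a bijection between $\cA(\Omega)$ and $\cA(\Omega')$, with inverse given by composing with $\Phi$. So it suffices to show
\[
\frac{I[v;\Omega']}{\|v\|^{n+1}_{L^{n+1}(\Omega')}}=|\det A|^{-2}\,\frac{I[u;\Omega]}{\|u\|^{n+1}_{L^{n+1}(\Omega)}}
\]
for every $u$, and then take infima.

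For the denominator, the change of variables $z'=\Phi(z)$ gives $\|v\|^{n+1}_{L^{n+1}(\Omega')}=|\det A|\,\|u\|^{n+1}_{L^{n+1}(\Omega)}$.

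For the numerator, I would first compute how the Monge--Amp\`ere measure transforms. From the definition of the subdifferential, $\xi\in\partial v(\Phi(z))$ if and only if $A^{T}\xi\in\partial u(z)$. Hence $\partial v(\Phi(E))=A^{-T}\partial u(E)$, so
\[
\mu_v(\Phi(E))=|\det A|^{-1}\mu_u(E),
\]
that is, $\mu_v=|\det A|^{-1}\Phi_\#\mu_u$. Integrating the Borel function $|v|=|u|\circ\Phi^{-1}$ against this pushforward gives $I[v;\Omega']=|\det A|^{-1}I[u;\Omega]$. This identity also holds when both energies are infinite, so no integrability issue arises.

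Dividing, the ratio gains exactly the factor $|\det A|^{-1}\cdot|\det A|^{-1}=|\det A|^{-2}$. Taking the infimum over the bijection yields \eqref{eq:affine}.

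The consequences follow immediately. Any two simplices are related by an affine map, and if they have equal volume then $|\det A|=1$, so they have the same eigenvalue. In dimension two, taking $A=aI$ gives $|\det A|=a^2$, hence $\lambda[a\Omega]=a^{-4}\lambda[\Omega]$.

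The only mildly delicate step is the subdifferential identity. It is a direct check: $v(w')\ge v(z')+\xi\cdot(w'-z')$ for all $w'\in\Omega'$ becomes, after substituting $w'=\Phi(w)$, the inequality $u(w)\ge u(z)+(A^T\xi)\cdot(w-z)$ for all $w\in\Omega$. Together with the linear change of Lebesgue measure under $A^{-T}$, this gives the measure identity; everything else is bookkeeping.
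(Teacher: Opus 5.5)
Your proof is correct. The paper gives no argument of its own for this lemma; it only cites \cite[Proposition 5.8]{Le2018}, so yours is a genuinely self-contained alternative. You work entirely with the Rayleigh quotient \eqref{eq:rayleigh}, using three facts. First, $u\mapsto u\circ\Phi^{-1}$ is a bijection from $\cA(\Omega)$ onto $\cA(A\Omega+b)$. Second, the subdifferential identity $\partial v(\Phi(z))=A^{-T}\partial u(z)$ gives $\mu_v=|\det A|^{-1}\Phi_\#\mu_u$. Third, the Jacobian of $\Phi$ produces the factor in the $L^{n+1}$ norm. Together these multiply every quotient by exactly $|\det A|^{-2}$, so the infima transform the same way.

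A shorter alternative goes through the equation and the uniqueness of the eigenvalue. If $u\in C^\infty(\Omega)\cap C(\overline{\Omega})$ is an eigenfunction on $\Omega$, then $v=u\circ\Phi^{-1}$ satisfies $D^2v=A^{-T}(D^2u\circ\Phi^{-1})A^{-1}$. Hence $v$ is a nonzero convex solution of $\det D^2 v=|\det A|^{-2}\lambda[\Omega]\,|v|^n$ in $A\Omega+b$ with zero boundary values. The uniqueness part of Le's existence theorem then forces $\lambda[A\Omega+b]=|\det A|^{-2}\lambda[\Omega]$.

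The two routes trade off as follows. The PDE route needs only classical calculus but depends on uniqueness of the eigenvalue. Your route depends instead on the variational characterization, which the paper already uses as its working tool. It handles all admissible competitors through Aleksandrov measures, including those with infinite energy. It also shows that the affine map sends minimizers of the quotient, namely the eigenfunctions, to minimizers.
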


In particular, $|\Omega|^2\lambda[\Omega]$ is affine invariant. The  comparison in \cref{thm:main} can therefore be written using one fixed triangle of area one.

\medskip
\noindent \textbf{Organization of the paper.} The rest of the paper is organized as follows. In Section \ref{sec:energy}, we express the Monge--Amp\`ere energy in terms of the Legendre transforms and obtain an energy estimate of convex envelopes by the Legendre
transforms and partial Legendre transforms. In Section \ref{sec:chords}, we apply this energy estimate to reduce polygons to triangles and prove Theorem \ref{thm:main}.

\medskip
\noindent\textbf{Acknowledgments.} The author would like to thank his advisor, Professor Nam Q. Le, for his patience and guidance throughout the course of this work.

\medskip
\noindent\textbf{AI assistance.} The main results of this paper were obtained through a series of chats with ChatGPT-6 Astra. The key strategies were obtained by ChatGPT. The author reworked and rewrote the article entirely. We take full responsibility for its correctness and content.

\section{Monge--Amp\`ere energy, Legendre transform, and convex envelope}\label{sec:energy}

In this section, we estimate in Proposition \ref{prop:envelope-energy} the Monge--Amp\`ere energy of the convex envelope of a globally Lipschitz function by the partial Legendre transform of the original function. When the original function is convex, the envelope equals that function and the estimate becomes an identity. The estimate in this section applies to any nonpositive Lipschitz function with zero boundary values on a polygon.

\subsection{The Legendre transform and the convex envelope}

Let $\Omega\subset\R^n$ be a convex domain. For $w\in C(\overline{\Omega})$, define the Legendre transform of $w$ and the support function of $\overline{\Omega}$ by
\begin{equation}
 w^*(\xi)=\max_{z\in \overline{\Omega}}\{\xi\cdot z-w(z)\},
 \qquad h_\Omega(\xi)=\max_{z\in \overline{\Omega}}\xi\cdot z.
 \label{eq:full-transform}
\end{equation}
Thus $w^*$ is the Legendre transform on $\R^n$ of $w$ extended by $+\infty$ outside $\overline{\Omega}$, and $h_\Omega$ is the Legendre transform of the zero function on $\overline{\Omega}$.

\medskip
The energy identity below is from \cite[Lemma 2.2]{Zhou2026}. That result holds for continuous convex functions with zero boundary values, with equality in $[0,+\infty]$. We state and prove the Lipschitz case used here.

\begin{lemma}\label{lem:full-dual}
Let $\Omega\subset\R^n$ be a bounded convex domain.
If $u\in C^{0,1}(\overline{\Omega})$ is convex and $u=0$ on $\partial\Omega$, then
\begin{equation}
 I[u;\Omega]=(n+1)\int_{\R^n}(u^*-h_\Omega)\dd\xi<\infty.
 \label{eq:full-dual}
\end{equation}
Moreover, $u^*-h_\Omega\geq0$ and
\begin{equation}
 u^*(\xi)=h_\Omega(\xi)\qquad\text{if }|\xi|>\lip{u}{\overline{\Omega}}.
 \label{eq:full-tail}
\end{equation}
\end{lemma}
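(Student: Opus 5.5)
We first establish the elementary facts and then the energy identity. Since $u\le 0$ on $\overline\Omega$ (convex, zero boundary values), for every $\xi$ we have $u^*(\xi)=\max_{\overline\Omega}\{\xi\cdot z-u(z)\}\ge \max_{\overline\Omega}\xi\cdot z=h_\Omega(\xi)$, so $u^*-h_\Omega\ge0$. For the tail \eqref{eq:full-tail}, let $L=\lip{u}{\overline\Omega}$ and $|\xi|>L$. Let $z_0\in\overline\Omega$ be a maximizer of $\xi\cdot z-u(z)$. If $z_0\in\Omega$, move from $z_0$ in the direction $\xi/|\xi|$ until hitting $\partial\Omega$ at $z_1=z_0+s\xi/|\xi|$ with $s>0$. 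Then
\[
\xi\cdot z_1-u(z_1)=\xi\cdot z_0+s|\xi|-0\ge \xi\cdot z_0-u(z_0)+s(|\xi|-L)>\xi\cdot z_0-u(z_0),
\]
using $-u(z_0)\le |u(z_0)-u(z_1)|\le Ls$. This contradicts maximality, so the maximum is attained only on $\partial\Omega$, where $u=0$. Hence $u^*(\xi)=\max_{\partial\Omega}\xi\cdot z=h_\Omega(\xi)$. It follows that $u^*-h_\Omega$ is a bounded function (both are $\le$ const$\cdot(1+|\xi|)$ and their difference is at most $\max|u|$) supported in the ball $B_L$. The integral is therefore finite.

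For the identity itself, the plan is to use homogeneity along the scaling $u\mapsto su$. Define $\Phi(s)=\int_{\R^n}((su)^*-h_\Omega)\,d\xi$ for $s\in[0,1]$, so that $\Phi(0)=0$. We have $(su)^*(\xi)=\max_z\{\xi\cdot z-su(z)\}$. By Danskin's theorem, for a.e.\ $\xi$ (those where $(su)^*$ is differentiable, i.e.\ the maximizer $z(\xi)$ is unique) the derivative is $\partial_s (su)^*(\xi)=-u(z_s(\xi))=|u(z_s(\xi))|$. The maps are uniformly Lipschitz in $s$ and supported in $B_L$ for $s\le1$, so we may differentiate under the integral. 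The maximizer $z_s(\xi)$ lies in $\Omega$ exactly when $\xi\in\partial(su)(z_s)$, and otherwise $u(z_s)=0$. Then the change of variables (area formula for the Monge--Amp\`ere measure, valid since the set of $\xi$ belonging to subgradients of two points is null) gives
\[
\Phi'(s)=\int_{\R^n}|u(z_s(\xi))|\,d\xi=\int_\Omega|u|\,d\mu_{su}=s^n\int_\Omega |u|\,d\mu_u .
\]
Integrating over $[0,1]$ gives $\Phi(1)=I[u;\Omega]/(n+1)$, which is \eqref{eq:full-dual}.

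The main obstacle is justifying the Danskin/area-formula step. Specifically, we must show that $\xi\mapsto z_s(\xi)$ pushes Lebesgue measure on $\partial(su)(\Omega)$ to $\mu_{su}$, and that $\xi$ outside $\partial(su)(\Omega)$ have maximizers on $\partial\Omega$. The first uses the standard fact that $\partial u(\Omega)$ has full measure where $u^*$ is differentiable. The second follows as in the tail argument, together with uniform convergence for the limit $s\to0$. As an alternative, one could cite \cite[Lemma 2.2]{Zhou2026} directly and only add the finiteness from the tail bound.
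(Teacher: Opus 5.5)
Your proof is correct, but it reaches the identity by a different mechanism than the paper.

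\textbf{Your route.} You run the homotopy $s\mapsto su$ and differentiate $\Phi(s)=\int_{\R^n}((su)^*-h_\Omega)\dd\xi$ by Danskin's theorem. Differentiating under the integral is legitimate, since the difference quotients are bounded by $\max|u|$ and vanish outside $\overline{B_L}$. You then identify $\Phi'(s)=\int_\Omega|u|\dd\mu_{su}=s^nI[u;\Omega]$ through the change-of-variables formula, and get the factor $n+1$ from $\int_0^1 s^n\dd s$.

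\textbf{The paper's route.} It works only at $s=1$. It quotes \cite[Lemma~2.3]{LeBM2026} for $I[u;\Omega]=\int_{\partial u(\Omega)}-u(Du^*)\dd\xi$ and rewrites $-u(Du^*(\xi))=u^*(\xi)-\xi\cdot Du^*(\xi)$. It then subtracts Euler's identity $h_\Omega=\xi\cdot Dh_\Omega$ and integrates by parts, $\int\xi\cdot D\Psi\dd\xi=-n\int\Psi\dd\xi$, so $n+1$ appears as $1+n$.

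\textbf{How they relate.} The two are the integrated and infinitesimal forms of one homogeneity. Since $(su)^*(\xi)=s\,u^*(\xi/s)$ and $h_\Omega$ is $1$-homogeneous, the substitution $\xi=s\zeta$ gives $\Phi(s)=s^{n+1}\Phi(1)$ outright. Your Danskin derivative at $s=1$ is exactly the paper's integrand $u^*-\xi\cdot Du^*$; with that observation you would only need Danskin at $s=1$.

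\textbf{Trade-offs.} Your version replaces the Euler identity and the integration by parts (which needs $\Psi$ Lipschitz and compactly supported) by differentiation under the integral sign. The price is invoking the change-of-variables fact for each $su$ instead of once; the paper's argument is a single pointwise computation. Your tail argument also differs from the paper's. You compare values along the ray from $z_0$ in direction $\xi/|\xi|$, whereas the paper uses $\partial u(\Omega)\subset\overline{B_L}$; yours does not even use convexity.

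\textbf{Small corrections to your last paragraph.}
\begin{enumerate}
\item That every maximizer for $\xi\notin\partial(su)(\Omega)$ lies on $\partial\Omega$ follows immediately from the definition of the subdifferential: a maximizer in $\Omega$ would put $\xi$ in $\partial(su)(\Omega)$. It does not need the tail argument.
\item No limit $s\to0$ is needed, since $\Phi$ is Lipschitz on $[0,1]$ with $\Phi(0)=0$ exactly.
\item The fact required for the change of variables is that the set of $\xi$ with a non-unique maximizer is Lebesgue-null. Equivalently, it is the set where $(su)^*$ fails to be differentiable. What you wrote about $\partial u(\Omega)$ having full measure is not the right formulation.
\end{enumerate}
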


\begin{proof}
Set $L=\lip{u}{\overline{\Omega}}$ and $\Psi=u^*-h_\Omega$.
If $z\in\Omega$ and $\xi\in\partial u(z)$, then
\[
 t\,\xi\cdot e\leq u(z+te)-u(z)\leq Lt
\]
for every unit vector $e$ and all sufficiently small $t>0$.
Taking $e=\xi/|\xi|$ when $\xi\ne0$ gives
\[
 \partial u(\Omega)\subset\{\xi\in\R^n:|\xi|\leq L\}.
\]
Thus \[\mu_u(\Omega)<\infty \quad\text{and} \quad I[u;\Omega]<\infty.\]
Since $u\leq0$, we have $\Psi\geq0$.

\medskip
Fix $\xi$ with $|\xi|>L$.
Suppose a maximizer $z$ defining $u^*(\xi)$ lies in $\Omega$.
Then
\[
 \xi\cdot z-u(z)\geq\xi\cdot w-u(w)
 \qquad\text{for all }w\in\Omega.
\]
Thus $\xi\in\partial u(z)$, which contradicts the subdifferential bound above.
Every maximizer therefore lies on $\partial\Omega$.
The linear function $z\mapsto\xi\cdot z$ also attains its maximum on $\partial\Omega$.
Since $u=0$ on $\partial\Omega$,
\[
 u^*(\xi)
 =\max_{z\in\partial\Omega}\xi\cdot z
 =\max_{z\in\overline{\Omega}}\xi\cdot z
 =h_\Omega(\xi).
\]
This proves \eqref{eq:full-tail}.
Both $u^*$ and $h_\Omega$ are Lipschitz because $\Omega$ is bounded.
Hence $\Psi$ is Lipschitz and compactly supported.

Since $u^*$ is locally Lipschitz in $\partial u(\Omega)$, by the Rademacher theorem, the gradient $Du^*(p)$ exists for $p\in \partial u(\Omega)$  almost everywhere in the sense of Lebesgue measure.   By \cite[Lemma~2.3]{LeBM2026}, 
\[
 I[u;\Omega]
 =\int_{\partial u(\Omega)}-u(D u^*(\xi))\dd\xi.
\]
At every differentiability point of $u^*$, the maximizer defining $u^*(\xi)$ is unique and equals $D u^*(\xi)\in\overline{\Omega}$.
If $\xi\notin\partial u(\Omega)$, this maximizer lies on $\partial\Omega$.
The zero boundary values therefore allow us to extend the integral to $\R^n$.
By the definition of $u^*$, we obtain
\[
 I[u;\Omega]
 =\int_{\R^n}\bigl(u^*(\xi)-\xi\cdot D u^*(\xi)\bigr)\dd\xi.
\]

The support function is positively homogeneous of degree one, so
$h_\Omega(\xi)=\xi\cdot D h_\Omega(\xi)$ almost everywhere.
It follows that
\[
 I[u;\Omega]
 =\int_{\R^n}\bigl(\Psi-\xi\cdot D\Psi\bigr)\dd\xi.
\]
Since $\Psi$ is Lipschitz and compactly supported, integration by parts gives
\[
 \int_{\R^n}\xi\cdot D\Psi\dd\xi
 =-n\int_{\R^n}\Psi\dd\xi.
\]
This proves \eqref{eq:full-dual}.
\end{proof}

\medskip
The following lemma allows us to apply the preceding energy identity to convex envelopes. Notice that the argument of the proof does not require the strict convexity of the domain.
\begin{lemma}[Convex envelope with zero boundary values]\label{lem:convex-envelope}
Let $\Omega\subset\R^n$ be a bounded convex domain.
Let $w\in C^{0,1}(\overline{\Omega})$ satisfy $w\leq0$ on $\overline{\Omega}$ and $w=0$ on $\partial\Omega$. Define the convex envelope of $w$ in $\overline\Omega$ by
\begin{equation}
 U(z)
 :=\sup\{\ell(z):\ell\text{ is affine on }\R^n,\ \ell\leq w\text{ on }\overline{\Omega}\}.
 \label{eq:convex-envelope}
\end{equation}
Then $U \in C^{0,1}(\overline{\Omega})$ is convex, and vanishes on $\partial\Omega$. Moreover,
\begin{equation}
 U\leq w\leq0,\qquad \lip{U}{\overline{\Omega}}\leq\lip{w}{\overline{\Omega}},\qquad U^*=w^*.
 \label{eq:convex-envelope-properties}
\end{equation}
If $w\not\equiv0$, then $U\in\cA(\Omega)$.
\end{lemma}

\begin{proof}
Set $L=\lip{w}{\overline{\Omega}}$ and $d(z)=\operatorname{dist}(z,\partial\Omega)$. Since $w = 0$ on $\partial \Omega$, we have $w\geq-Ld$. Since $d(z)$ is concave on bounded convex domains, $-Ld$ is convex. Hence, by the definition of $U$,
\begin{equation}
 -Ld\leq U\leq w\leq0\quad\text{on }\overline{\Omega}.
 \label{eq:distance-barrier}
\end{equation}
Thus $U$ is finite and convex, and extends continuously by zero to $\partial \Omega$.

Let $z\in\Omega$ and $0\ne\xi\in\partial U(z)$. Choose $\tau > 0$ such that $y = z+\tau\xi/|\xi| \in \partial \Omega$. 
Since $\xi \in \partial U(z)$, continuity gives
\[
0 = U(y) \geq U(z) + \xi \cdot (y-z) = U(z) + \tau|\xi|.
\]
By \eqref{eq:distance-barrier},
\[
 \tau|\xi|\leq-U(z)\leq Ld(z)\leq L\tau.
\]
Thus, every $\xi \in \partial U(z)$ satisfies $|\xi| \leq L$. For $z, z' \in \Omega$, choose $\xi \in \partial U(z)$ and $\xi' \in \partial U(z')$. Then, 
\[
\xi \cdot (z' - z) \leq U(z') - U(z) \leq \xi' \cdot (z' - z).
\]
Hence $|U(z) - U(z')| \leq L|z - z'|$. By continuity, the same bound holds on $\overline{\Omega}$, proving the Lipschitz bound.

Now, since $U\leq w$, we have $U^*\geq w^*$. On the other hand, for every $\xi$, the affine function $z\mapsto\xi\cdot z-w^*(\xi)$ lies below $w$ and hence below $U$. Thus $U^*(\xi)\leq w^*(\xi)$. Finally, if $w \not\equiv 0$, then $U\leq w\leq0$ implies that $U \not\equiv 0$. The proof is complete.
\end{proof}

\begin{samepage}
\subsection{An integral inequality for Legendre transforms}

We next compare the Legendre transforms of two functions $-v$ and $-h$ with matching endpoint values. The comparison function $h$ is concave; $v$ need only be Lipschitz. Equality holds if $v$ is also concave. $v', h'$ denote almost-everywhere derivatives of $v, h$, respectively, and $s_+=\max\{s,0\}$.

\begin{lemma}\label{lem:one-dimensional}
Let $v,h\in C^{0,1}(J)$, where $J=[\ell,r]\subset\R$ and $\ell<r$. Assume that $h$ is concave and
\[
 v(\ell)=h(\ell),\qquad v(r)=h(r).
\]
For $q\in\R$, set
\[
 S_v(q)=\max_{x\in J}\{qx+v(x)\},
 \qquad S_h(q)=\max_{x\in J}\{qx+h(x)\}.
\]
Then $S_v-S_h$ has compact support and
\begin{equation}
 \int_\R(S_v-S_h)\dd q
 \leq\frac12\int_\ell^r(v'^2-h'^2)\dd x.
 \label{eq:one-dimensional}
\end{equation}
Equality holds if $v$ is concave.
\end{lemma}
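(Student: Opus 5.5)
The plan is to reduce to the case where $v$ is concave, prove that case as an exact identity, and obtain the inequality from a Dirichlet-principle comparison.

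\emph{Reduction to a concave $v$.} Let $\hat v$ be the least concave majorant of $v$ on $J$, i.e.\ the infimum of affine functions lying above $v$ on $J$. This is the one-dimensional analogue of \cref{lem:convex-envelope} applied to $-v$. I will check four properties.
\begin{enumerate}
\item $S_{\hat v}=S_v$, exactly as $U^*=w^*$ in \cref{lem:convex-envelope}.
\item $\hat v(\ell)=v(\ell)$ and $\hat v(r)=v(r)$. Indeed, an affine function with prescribed value at an endpoint and very steep slope lies above the Lipschitz $v$.
\item $\hat v$ is Lipschitz with $[\hat v]_{C^{0,1}(J)}\le [v]_{C^{0,1}(J)}$.
\item $\int_\ell^r \hat v'^2\dd x\le\int_\ell^r v'^2\dd x$.
\end{enumerate}
For (iv), the set $\{\hat v>v\}$ is relatively open in $(\ell,r)$ and is a countable union of intervals $(a,b)$. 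On each such interval $\hat v$ is affine and agrees with $v$ at $a$ and $b$. Cauchy--Schwarz then gives
\[
\int_a^b \hat v'^2\dd x=\frac{(v(b)-v(a))^2}{b-a}\le\int_a^b v'^2\dd x .
\]
On the complementary set $\{\hat v=v\}$ we have $\hat v'=v'$ almost everywhere. Summing these contributions gives (iv). Hence it suffices to prove \eqref{eq:one-dimensional} with equality for concave $v$. Equality for concave $v$ is then immediate, because in that case $\hat v=v$.

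\emph{Identity for concave $g\in\{v,h\}$.} Fix a concave $g$ in $C^{0,1}(J)$. For each $q$, the function $x\mapsto qx+g(x)$ is concave. Its maximizer is $x=\ell$ when $q\le -g'(\ell+)$ and $x=r$ when $q\ge -g'(r-)$. Otherwise the maximizer $x_g(q)$ satisfies $-q\in\partial g(x_g(q))$.

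Since the one-sided derivatives are bounded by the Lipschitz constants, for $|q|$ large both $S_v$ and $S_h$ equal $\max\{q\ell+g(\ell),\,qr+g(r)\}$. These endpoint values coincide for $v$ and $h$, so $S_v-S_h$ has compact support. This argument gives compact support for general $v$ as well, after passing to $\hat v$.

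Next, $S_g$ is convex and Lipschitz, and $S_g'(q)=x_g(q)$ almost everywhere. Consequently
\[
S_g(q)-qS_g'(q)=g(x_g(q))\quad\text{a.e.}
\]
Integrating by parts over a large window $[-M,M]$, outside of which $S_v-S_h$ vanishes, gives
\[
2\int_\R(S_v-S_h)\dd q=\int_{-M}^M\bigl[g_v(x_v(q))-h(x_h(q))\bigr]\dd q ,
\]
where $g_v$ denotes $v$ (concave) and the boundary terms vanish.

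\emph{Main computation.} I expect the main obstacle to be this step, namely computing $\int_{-M}^M g(x_g(q))\dd q$ rigorously when $g''$ is only a measure. The integral splits into three parts.
\begin{itemize}
\item The $\ell$-region has length $M-g'(\ell+)$ and contributes $g(\ell)\bigl(M-g'(\ell+)\bigr)$.
\item The $r$-region has length $M+g'(r-)$ and contributes $g(r)\bigl(M+g'(r-)\bigr)$.
\item The middle part is $\int_{(\ell,r)} g\dd(-g')$. This is justified by the change of variables $q=-g'(x)$ for monotone $g'$, treating jumps of $g'$ through the subdifferential.
\end{itemize}
Integrating the middle part by parts in the Stieltjes sense gives
\[
\int_{(\ell,r)} g\dd(-g')=g(\ell)g'(\ell+)-g(r)g'(r-)+\int_\ell^r g'^2\dd x .
\]
Summing the three parts, the $g'(\ell+)$ and $g'(r-)$ terms cancel, and we get
\[
\int_{-M}^M g(x_g(q))\dd q=M\bigl(g(\ell)+g(r)\bigr)+\int_\ell^r g'^2\dd x .
\]
Subtracting the corresponding expressions for $v$ and $h$, which share the same endpoint values, yields
\[
\int_\R(S_v-S_h)\dd q=\frac12\int_\ell^r(v'^2-h'^2)\dd x
\]
for concave $v$. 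Combined with the reduction step, this proves \eqref{eq:one-dimensional} in general.
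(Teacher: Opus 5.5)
Your proof is correct, but it takes a different route from the paper.

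\textbf{The paper's route.} The paper never passes to a concave majorant and never treats $g''$ as a measure. It writes $S_v(q)=q\ell+v(\ell)+\max_{x}\int_\ell^x(q+v')\dd s$ and bounds the maximum by $\int_\ell^r(q+v')_+\dd s$. This bound is an equality when $v$ is concave, because then $q+v'$ changes sign only once. It then integrates over $q\in[-R,R]$ using Fubini and the elementary formula $\int_{-R}^R(q+\tau)_+\dd q=(R+\tau)^2/2$. The $R$-dependent terms involve only the endpoint values, so they cancel against the same exact computation for $h$. This handles nonconcave $v$ directly and takes only a few lines.

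\textbf{Your route.} You split the work into two parts.
\begin{enumerate}
\item A Dirichlet-principle reduction: $S_{\hat v}=S_v$, the endpoint values agree, and $\int\hat v'^2\le\int v'^2$ by Cauchy--Schwarz on the affine pieces of $\hat v$.
\item An exact identity for concave $g$. It comes from $S_g-qS_g'=g(x_g)$, which is the one-dimensional analogue of the integration by parts $\Psi-\xi\cdot D\Psi$ in Lemma~\ref{lem:full-dual}. This is followed by pushing Lebesgue measure forward under the generalized inverse of $-g'$ and a Stieltjes integration by parts.
\end{enumerate}

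\textbf{Cost.} Your route needs more machinery. You use that $\hat v$ is affine on the components of $\{\hat v>v\}$, the pushforward identity, and Stieltjes integration by parts across jumps of $g'$. Each is standard, but each needs care, and you state them rather than prove them.

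\textbf{Payoff.} You obtain the exact formula $\int_\R(S_v-S_h)\dd q=\tfrac12\int_\ell^r(\hat v'^2-h'^2)\dd x$. So the defect in the inequality is precisely $\tfrac12\int_\ell^r(v'^2-\hat v'^2)\dd x$. This defect is strictly positive unless $v=\hat v$, since $v$ cannot be affine on a component of $\{\hat v>v\}$. Your argument therefore also gives the converse the paper does not state: equality holds only if $v$ is concave. It also reproduces, in one dimension, the envelope-plus-duality structure of Proposition~\ref{prop:envelope-energy}.
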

\end{samepage}

\begin{proof}
Choose $R>\max\{\lip{v}{J},\lip{h}{J}\}$. For $q\geq R$, both maxima occur at $r$; for $q\leq-R$, both occur at $\ell$. Since $v = h$ at the endpoints, $S_v=S_h$ outside $[-R,R]$.

The absolute continuity of Lipschitz functions gives
\begin{align}
 S_v(q)
 &=q\ell+v(\ell)+\max_{x\in J}\int_\ell^x(q+v'(s))\dd s\notag\\
 &\leq q\ell+v(\ell)+\int_\ell^r(q+v'(s))_+\dd s.
 \label{eq:positive-part}
\end{align}
If $v$ is concave, let $c\in J$ maximize $x\mapsto qx+v(x)$.
Then $q+v'\geq0$ almost everywhere on $(\ell,c)$
and $q+v'\leq0$ almost everywhere on $(c,r)$.
Hence equality holds in \eqref{eq:positive-part}.

For $|\tau|\leq R$, we have $\int_{-R}^R(q+\tau)_+\dd q=(R+\tau)^2/2$. Integrating \eqref{eq:positive-part} in $q$ therefore yields
\begin{equation}
 \int_{-R}^R S_v(q)\dd q
 \leq\frac{R^2}{2}(r-\ell)+R\bigl(v(\ell)+v(r)\bigr)
       +\frac12\int_\ell^r v'^2\dd x,
 \label{eq:one-dimensional-truncated}
\end{equation}
with equality when $v$ is concave. The corresponding formula for $h$ is an equality. Subtracting it cancels the terms involving $R$ and proves the assertion.
\end{proof}

\subsection{The partial Legendre transform and the energy estimate}\label{sec:partial}

Let $P\subset\R^2$ be a convex polygon. Write its vertical sections as
\begin{equation}
 \overline{P}=\{(x,y):x\in J,\ a(x)\leq y\leq b(x)\},\qquad J=[\ell,r].
 \label{eq:fiber-domain}
\end{equation}
The functions $a,b$ are Lipschitz and piecewise affine, with $a$ convex, $b$ concave, and $a<b$ on $(\ell,r)$. An endpoint section may be a point or a vertical side.

For a Lipschitz function $w$ on $\overline{P}$, define its partial Legendre transform in $y$ and the support function of the section by
\begin{align}
 F(x,p)&=\max_{a(x)\leq y\leq b(x)}\{py-w(x,y)\},\label{eq:partial-transform}\\
 H(x,p)&=\max_{a(x)\leq y\leq b(x)}py
       =\begin{cases}pb(x),&p\geq0,\\pa(x),&p\leq0.\end{cases}
 \label{eq:fiber-support}
\end{align}
Write $z=(x,y)$ and $\xi=(q,p)$ for the primal and dual variables. Subscripts denote almost-everywhere partial derivatives. For each $p$, $H(\cdot,p)$ is concave. If $w$ is convex, then $F(\cdot,p)$ is also concave. 

We now prove the energy estimates for the convex envelopes.

\begin{samepage}
\begin{proposition}\label{prop:envelope-energy}
Let $P\subset\R^2$ be a convex polygon.
Suppose $w\in C^{0,1}(\overline{P})$, $w\leq0$ on $\overline{P}$, and $w=0$ on $\partial P$. Let 
$U$ be the convex envelope of $w$ in $\overline{P}$,
and let $F,H$ be given by \eqref{eq:partial-transform}--\eqref{eq:fiber-support}. Then
\begin{align}
 I[U;P]&\leq\frac32\int_\R\int_\ell^r(F_x^2-H_x^2)\dd x\dd p,
 \label{eq:envelope-energy}\\
 \int_P|U|^3\dd x\dd y&\geq\int_P|w|^3\dd x\dd y.
 \label{eq:envelope-mass}
\end{align}
If $w$ is convex, then $U=w$ and equality holds in \eqref{eq:envelope-energy}. If $w\not\equiv0$, then $U\in\cA(P)$.
\end{proposition}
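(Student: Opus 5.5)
The plan is to combine Lemma~\ref{lem:convex-envelope}, Lemma~\ref{lem:full-dual} in dimension $n=2$, and Lemma~\ref{lem:one-dimensional} applied fiberwise in $p$.

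\emph{Step 1: properties of $U$ and the mass inequality.} By Lemma~\ref{lem:convex-envelope}, $U\in C^{0,1}(\overline P)$ is convex, vanishes on $\partial P$, satisfies $U\leq w\leq 0$, and has $U^*=w^*$; if $w\not\equiv0$ then $U\in\cA(P)$. Since $U\le w\le0$ gives $|U|\geq|w|$ pointwise, \eqref{eq:envelope-mass} is immediate. If $w$ is convex, then $w$ is itself an admissible supremand in \eqref{eq:convex-envelope} (it is a supremum of affine minorants), so $U=w$.

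\emph{Step 2: reduce the energy to a double integral.} Applying Lemma~\ref{lem:full-dual} with $n=2$ to $U$ gives
\[
I[U;P]=3\int_{\R^2}(U^*-h_P)\dd\xi=3\int_{\R^2}(w^*-h_P)\dd q\dd p .
\]
The integrand is nonnegative and compactly supported, so Fubini applies. I will compute the inner integral in $q$ for each fixed $p$, maximizing first in $y$ and then in $x$:
\[
w^*(q,p)=\max_{x\in J}\{qx+F(x,p)\},\qquad h_P(q,p)=\max_{x\in J}\{qx+H(x,p)\}.
\]

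\emph{Step 3: fiberwise application of Lemma~\ref{lem:one-dimensional}.} Fix $p$ and set $v=F(\cdot,p)$ and $h=H(\cdot,p)$ on $J=[\ell,r]$. We already know $h$ is concave and Lipschitz, since $a$ and $b$ are Lipschitz. The hypotheses that still need checking are these.
\begin{enumerate}
\item $v$ is Lipschitz in $x$. This holds because $F(x,p)=\max_{s\in[0,1]}\{p\,y_s(x)-w(x,y_s(x))\}$ with $y_s=a+s(b-a)$ Lipschitz in $x$, so $F$ is a maximum of uniformly Lipschitz functions, with constant controlled by $|p|$ and the Lipschitz constant of $w$.
\item $v=h$ at $x=\ell$ and $x=r$. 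The endpoint sections lie in $\partial P$, where $w=0$, so $F(\ell,p)=H(\ell,p)$ and $F(r,p)=H(r,p)$.
\end{enumerate}
The lemma then gives
\[
\int_\R (w^*-h_P)(q,p)\dd q\le \tfrac12\int_\ell^r(F_x^2-H_x^2)\dd x,
\]
with equality when $F(\cdot,p)$ is concave. Integrating over $p$ and multiplying by $3$ yields \eqref{eq:envelope-energy}. When $w$ is convex, $F(\cdot,p)$ is concave as noted before the proposition, so every fiber inequality is an equality and equality holds in \eqref{eq:envelope-energy}.

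\emph{Main obstacle.} I expect the delicate point to be justifying the $p$-integration, that is, showing the right-hand side of \eqref{eq:envelope-energy} is a well-defined (Lebesgue integrable) function of $p$ that can be interchanged legitimately.
\begin{enumerate}
\item For $|p|$ large, the maximum in \eqref{eq:partial-transform} is attained at $y=b(x)$ (if $p>0$) or $y=a(x)$ (if $p<0$), where $w=0$. This follows as in \eqref{eq:full-tail}, using the Lipschitz bound on $w$ in $y$: for $p>\lip{w}{\overline P}$ the map $y\mapsto py-w(x,y)$ is increasing. Hence $F=H$ there and the fiber integrand vanishes, so the $p$-integral runs over a compact set.
\item Measurability of $(x,p)\mapsto F_x$ follows because $F$ is jointly Lipschitz on compact sets.
\end{enumerate}
Once the support in $p$ is bounded, the fiber inequalities integrate directly.
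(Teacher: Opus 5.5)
Your proposal is correct and follows essentially the same route as the paper. Both arguments use Lemma~\ref{lem:convex-envelope} for the mass inequality and $U^*=w^*$, the dual identity of Lemma~\ref{lem:full-dual} with $n=2$, successive maximization to write $w^*$ and $h_P$ as one-dimensional transforms of $F(\cdot,p)$ and $H(\cdot,p)$, a fiberwise application of Lemma~\ref{lem:one-dimensional}, and the tail identity $F=H$ for $|p|>\lip{w}{\overline P}$ to justify the $p$-integration; the only difference is cosmetic, namely that you get the Lipschitz bound of $F(\cdot,p)$ from the parametrization $y_s=a+s(b-a)$ of the sections instead of the paper's nearest-point comparison.
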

\end{samepage}

\begin{proof}
By \cref{lem:convex-envelope}, $U$ is Lipschitz, convex, and zero on $\partial P$, with $U\leq w\leq0$ and $U^*=w^*$. This proves \eqref{eq:envelope-mass}. Also, $w \not\equiv 0$ implies $U \in \mathcal{A}(P)$. We prove the energy estimate in three steps.

\medskip
\emph{Step 1. Regularity and behavior for large $|p|$.}
For $y\in[a(x),b(x)]$, let $y'$ be its nearest point in $[a(x'),b(x')]$. Then
$|y-y'|\leq\max\{|a(x)-a(x')|,|b(x)-b(x')|\}$.
Compare $py-w(x,y)$ with $py'-w(x',y')$, take maxima, and interchange $x,x'$. The Lipschitz continuity of $a,b,w$ shows that, for every $R>0$, there is a constant $C_R$ such that
\begin{equation}
 |F(x,p)-F(x',p)|\leq C_R|x-x'|\qquad\text{if   }|p|\leq R.
 \label{eq:F-lipschitz}
\end{equation}
Since $P$ is bounded, the defining maxima give a uniform Lipschitz bound in $p$. Together with the preceding estimate in $x$, this proves continuity of $F$. Since $w\leq0$ and the endpoint sections lie on $\partial P$,
\begin{equation}
 F\geq H,\qquad F(\ell,p)=H(\ell,p),\qquad F(r,p)=H(r,p).
 \label{eq:F-endpoints}
\end{equation}
Set $L=\lip{w}{\overline{P}}$. For $p>L$, the function $y\mapsto py-w(x,y)$ is strictly increasing; for $p<-L$, it is strictly decreasing. Its maximum is therefore at the corresponding boundary point, where $w=0$. Thus
\begin{equation}
 F(x,p)=H(x,p)\qquad\text{if   }|p|>L.
 \label{eq:F-tails}
\end{equation}
Choose $R>L$. The difference $F_x^2-H_x^2$ vanishes outside $J\times[-R,R]$, and both derivatives are bounded on this strip. This proves the absolute integrability of the difference.

\medskip
\emph{Step 2. The energy upper bound.}
Successive maximizations give
\[
 w^*(q,p)=\max_{x\in J}\{qx+F(x,p)\},\qquad
 h_P(q,p)=\max_{x\in J}\{qx+H(x,p)\}.
\]
For each $p$, apply \cref{lem:one-dimensional} using \eqref{eq:F-endpoints} and concavity of $H(\cdot,p)$. We obtain
\begin{equation}
 \int_\R\bigl(w^*(q,p)-h_P(q,p)\bigr)\dd q
 \leq\frac12\int_\ell^r(F_x^2-H_x^2)\dd x.
 \label{eq:dual-slice}
\end{equation}
By \cref{lem:full-dual}, $w^*-h_P=U^*-h_P$ is nonnegative and compactly supported. Integrating \eqref{eq:dual-slice} in $p$ and using \eqref{eq:full-dual} with $n=2$ proves \eqref{eq:envelope-energy}.

\medskip
\emph{Step 3. Equality for convex functions.}
If $w$ is convex, then $U=w$. Extend $w$ by $+\infty$ outside $\overline{P}$. For fixed $p$, the function $w(x,y)-py$ is convex, so
\[
 -F(x,p)=\inf_y\{w(x,y)-py\}
\]
is convex in $x$. Thus $F(\cdot,p)$ is concave, and equality holds in \eqref{eq:dual-slice} by \cref{lem:one-dimensional}. Integrating gives equality in \eqref{eq:envelope-energy}.
\end{proof}

\section{Proof of Theorem \ref{thm:main}}\label{sec:chords}
We first use the energy estimate in \cref{prop:envelope-energy} to establish a neighbor-decreasing property of the Monge--Amp\`ere eigenavlue for a class of area-preserving deformations of convex polygons. This property is the key for the reduction from polygons to triangles. We prove Theorem \ref{thm:main} at the end of the section. 

\subsection{Comparison with the endpoints}

Let $P_0$ be a convex polygon as in \eqref{eq:fiber-domain}, and let $\eta:J\to\R$ be Lipschitz and piecewise affine. Fix $t_-<0<t_+$. For $t\in[t_-,t_+]$, set
\begin{equation}
 P_t=\{(x,y+t\eta(x)):(x,y)\in P_0\}.
 \label{eq:movement}
\end{equation}
Assume that every $P_t$ is convex.  The boundary functions are $a_t=a+t\eta$ and $b_t=b+t\eta$, so the chord lengths are unchanged and
\begin{equation}
 |P_t|=\int_\ell^r(b_t-a_t)\dd x=|P_0|.
 \label{eq:area-preserved}
\end{equation}
Extending $\eta$ constantly outside $J$ makes the map in \eqref{eq:movement} a bi-Lipschitz homeomorphism of $\R^2$. It maps the closure and boundary of $P_0$ onto those of $P_t$.

We now compare $\lambda[P_0]$ with the Monge--Amp\`ere eigenvalues at the two endpoints by using the energy estimates from Proposition~\ref{prop:envelope-energy}.
\begin{theorem}\label{thm:endpoint-comparison}
For every family \eqref{eq:movement} satisfying these assumptions, at least one endpoint has no larger Monge--Amp\`ere eigenvalue than $P_0$:
\begin{equation}
 \min\{\lambda[P_{t_-}],\lambda[P_{t_+}]\}\leq\lambda[P_0].
 \label{eq:endpoint-comparison}
\end{equation}
\end{theorem}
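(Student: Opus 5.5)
Let $u$ be a Monge--Amp\`ere eigenfunction of $P_0$. By \cite[Theorem 1.1]{Le2026} we may take $u\in C^{0,1}(\overline{P_0})$, and after normalizing, $u\le 0$, $u\not\equiv0$, $u=0$ on $\partial P_0$. By \eqref{eq:rayleigh} it attains $\lambda[P_0]=I[u;P_0]/\|u\|_{L^3(P_0)}^3$.

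For $t\in[t_-,t_+]$ we transport $u$ by the shear, setting $w_t(x,y)=u(x,y-t\eta(x))$ on $\overline{P_t}$. The map \eqref{eq:movement} is bi-Lipschitz and maps $\partial P_0$ onto $\partial P_t$, so $w_t$ is Lipschitz, satisfies $w_t\le0$, and vanishes on $\partial P_t$. The shear has Jacobian one, so $\|w_t\|_{L^3(P_t)}=\|u\|_{L^3(P_0)}$. Let $U_t$ be the convex envelope of $w_t$ on $\overline{P_t}$. By \cref{prop:envelope-energy}, $U_t\in\cA(P_t)$ and $\|U_t\|_{L^3(P_t)}^3\ge \|u\|_{L^3(P_0)}^3$. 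It also gives
\[
I[U_t;P_t]\le \tfrac32\int_\R\int_\ell^r\bigl((F_t)_x^2-(H_t)_x^2\bigr)\dd x\dd p ,
\]
where $F_t,H_t$ are the partial transform of $w_t$ and the section support function of $P_t$.

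The key computation is to express $F_t$ and $H_t$ through $F_0$ and $H_0$. Substituting $y=y'+t\eta(x)$ in \eqref{eq:partial-transform} gives
\[
F_t(x,p)=\max_{a(x)\le y'\le b(x)}\{p y'+pt\eta(x)-u(x,y')\}=F_0(x,p)+tp\,\eta(x),
\]
and in the same way $H_t(x,p)=H_0(x,p)+tp\,\eta(x)$. Differentiating in $x$ almost everywhere,
\[
(F_t)_x^2-(H_t)_x^2=(F_0)_x^2-(H_0)_x^2+2tp\,\eta'(x)\bigl((F_0)_x-(H_0)_x\bigr),
\]
because the $t^2p^2\eta'^2$ terms cancel. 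The difference $(F_0)_x-(H_0)_x$ vanishes for $|p|>L$ by \eqref{eq:F-tails}, and it is bounded on the strip by \eqref{eq:F-lipschitz}. Hence
\[
b:=3\int_\R\int_\ell^r p\,\eta'\bigl((F_0)_x-(H_0)_x\bigr)\dd x\dd p
\]
is a finite constant independent of $t$. Since $u$ is convex, \cref{prop:envelope-energy} gives equality at $t=0$: $I[u;P_0]=\tfrac32\iint((F_0)_x^2-(H_0)_x^2)$. Therefore
\[
I[U_t;P_t]\le I[u;P_0]+tb\qquad\text{for all }t\in[t_-,t_+].
\]

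To conclude, choose $t=t_+$ if $b\le0$ and $t=t_-$ if $b>0$; in either case $tb\le0$. Then
\[
\lambda[P_t]\le \frac{I[U_t;P_t]}{\|U_t\|_{L^3(P_t)}^3}\le\frac{I[u;P_0]}{\|u\|_{L^3(P_0)}^3}=\lambda[P_0],
\]
which is \eqref{eq:endpoint-comparison}. Note that convexity of the intermediate $P_t$ is not needed here, only convexity of the two endpoints.

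The main obstacle is justifying the linear-in-$t$ structure. The step $F_t=F_0+tp\eta$ relies on the shear acting fiberwise by translation, and the expansion of the squares relies on $\eta$ being Lipschitz so that $\eta'$ exists almost everywhere. We also need the cross term to be integrable uniformly in $t$, which follows from the bounded support in $p$ and the bounds of Step 1 in the proof of \cref{prop:envelope-energy}. Everything else is routine bookkeeping with the earlier lemmas.
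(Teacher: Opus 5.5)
Your proposal is correct and follows the paper's proof essentially step for step. You transport the eigenfunction by the shear, take its convex envelope, and use $F_t=F_0+tp\eta$ and $H_t=H_0+tp\eta$ so that the quadratic terms cancel and Proposition~\ref{prop:envelope-energy} yields the affine bound $I[U_t;P_t]\le I[u;P_0]+tb$; you then choose the endpoint by the sign of $b$. Your side remark that only convexity of $P_0$ and $P_{t_\pm}$ is used is also right, and in fact convexity at the two endpoints already forces convexity for every intermediate $t$, since $a_t,b_t$ are affine in $t$.
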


\begin{proof}
Let $u$ be a nonzero convex eigenfunction on $P_0$. It is Lipschitz by \cite[Theorem 1.1(i)]{Le2026}. Put
\[
 E=I[u;P_0],\qquad M=\int_{P_0}|u|^3\dd x\dd y>0,
 \qquad E=\lambda[P_0]M.
\]
We will find an affine upper bound for $\lambda[P_t]$ that agrees with it at $t=0$. The sign of its slope determines which endpoint to choose. Let $V$ be the partial Legendre transform of $u$, and let $H$ be the support function of the sections of $P_0$.

\medskip
\emph{Step 1. A convex test function on $P_t$.}
We transport $u$ and take its convex envelope. Define
\begin{equation}
 w_t(x,y)=u(x,y-t\eta(x)),\qquad
 U_t= \mbox{the convex envelope of $w_t$ in $\overline P_t$}.
 \label{eq:transport}
\end{equation}
Then $w_t$ is Lipschitz, negative in $P_t$, and zero on $\partial P_t$. Changing variables in each section gives 
\begin{equation}
 \int_{P_t}|w_t|^3\dd x\dd y=M.
 \label{eq:transport-mass}
\end{equation}
By \cref{prop:envelope-energy}, $U_t\in\cA(P_t)$ and its $L^3$ integral is at least $M$. If $F_t$ denotes the partial Legendre transform of $w_t$, then
\begin{equation}
 F_t=V+tp\eta(x),\qquad H_t=H+tp\eta(x).
 \label{eq:transport-dual}
\end{equation}
Thus, for almost every $(x,p)$,
\begin{equation}
 (F_t)_x^2-(H_t)_x^2
 =V_x^2-H_x^2
 +2tp\eta'(x)(V_x-H_x).
 \label{eq:quadratic-cancellation}
\end{equation}
 Define
\begin{equation}
 B=3\int_\R\int_\ell^r
 p\eta'(x)(V_x-H_x)\dd x\dd p.
 \label{eq:linear-coefficient}
\end{equation}
The integrand vanishes for $|p|>\lip{u}{\overline{P_0}}$ and is bounded on $[\ell, r]\times [-\lip{u}{\overline{P_0}}, \lip{u}{\overline{P_0}}]$ by Step~1 of the proof of \cref{prop:envelope-energy}. Hence $B$ is finite. 

Integrating \eqref{eq:quadratic-cancellation}, and using the equality case of \cref{prop:envelope-energy} for $u$, gives
\begin{equation}
 \frac32\int_\R\int_\ell^r\bigl((F_t)_x^2-(H_t)_x^2\bigr)\dd x\dd p
 =E+tB.
 \label{eq:affine-energy}
\end{equation}
Applying the inequality case to $w_t$, we obtain
\begin{equation}
 I[U_t;P_t]\leq E+tB,\qquad
 \int_{P_t}|U_t|^3\dd x\dd y\geq M.
 \label{eq:competitor-bounds}
\end{equation}

\medskip
\emph{Step 2. Choosing an endpoint.}
Since $\lambda[P_t]>0$, the variational characterization of the Monge--Amp\`ere eigenvalue in  \eqref{eq:rayleigh} gives
\[
 \lambda[P_t]M
 \leq\lambda[P_t]\int_{P_t}|U_t|^3\dd x\dd y
 \leq I[U_t;P_t]\leq E+tB.
\]
Dividing by $M$ yields
\begin{equation}
 \lambda[P_t]\leq\lambda[P_0]+t\frac{B}{M}.
 \label{eq:upper-support}
\end{equation}
If $B\geq0$, apply \eqref{eq:upper-support} at $t=t_-<0$ to obtain $\lambda[P_{t_-}]\leq\lambda[P_0]$. If $B<0$, use $t=t_+>0$ to obtain $\lambda[P_{t_+}]\leq\lambda[P_0]$. If $B=0$, either endpoint may be chosen. This proves \eqref{eq:endpoint-comparison}.
\end{proof}

\begin{remark}
From \eqref{eq:upper-support}, we can show that $\lambda[P_t]$ is concave in $t\in [t_{-}, t_+]$. Indeed, let $t_0\leq t_1\in [t_{-}, t+]$ and $\theta\in (0, 1)$. By shifting the arguments in $t$, we might assume that $(1-\theta) t_0 + \theta t_1=0$. Now, apply \eqref{eq:upper-support} at $t_0$ and $t_1$, multiply by $(1-\theta)$ and $\theta$, and add. We obtain
\[(1-\theta) \lambda[P_{t_0}] + \theta \lambda [P_{t_1}] \leq \lambda[P_0]=\lambda [P_{(1-\theta) t_0 + \theta t_1}].\]
\end{remark}
\subsection{Reduction to triangles}\label{sec:polygons}
The following lemma allows us to replace a convex polygon with at least four vertices by one with fewer vertices that has the same area and no larger Monge--Amp\`ere eigenvalue.

\begin{lemma}\label{lem:vertex-removal}
For every convex polygon $P$ with $m\geq4$ vertices, there is a convex polygon $P'$ with at most $m-1$ vertices such that
\begin{equation}
 |P'|=|P|,\qquad \lambda[P']\leq\lambda[P].
 \label{eq:one-step}
\end{equation}
\end{lemma}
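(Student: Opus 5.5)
The plan is to realize the vertex move described in the introduction as a family \eqref{eq:movement} and then apply \cref{thm:endpoint-comparison}. Label the vertices of $P$ cyclically as $v_1,\dots,v_m$ with $m\geq4$. Pick a vertex $v=v_k$ with neighbors $v_{k-1},v_{k+1}$. After a rotation of coordinates, the diagonal $v_{k-1}v_{k+1}$ becomes vertical; this is allowed by \cref{lem:affine}, since rotations preserve both area and eigenvalue. Then $v$ lies strictly to one side, say to the right, at abscissa $x_v$, and the diagonal lies at $x=x_0<x_v$. Moving $v$ vertically, i.e.\ parallel to the diagonal, while fixing all other vertices preserves the area of the triangle $v_{k-1}vv_{k+1}$, hence the area of $P$. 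I would realize this move as a shear of the form \eqref{eq:movement}. Take $\eta(x)=0$ for $x\leq x_0$ and $\eta(x)=(x-x_0)/(x_v-x_0)$ for $x\geq x_0$ on $J$. If $x_v=r$ is the rightmost point of $J$, this $\eta$ moves exactly the part of $P$ lying right of the diagonal.

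\emph{Caution.} The part of $\overline P$ with $x\geq x_0$ is exactly the triangle $v_{k-1}vv_{k+1}$ only if no other vertex lies right of the line $x=x_0$. By convexity the line through $v_{k-1}v_{k+1}$ separates $v$ from all the other vertices, so the region $\{x\geq x_0\}\cap\overline P$ is precisely that triangle. Hence $\eta$ is Lipschitz and piecewise affine, and $P_t$ is the polygon with $v$ replaced by $v+t(0,1)$, all other vertices fixed. Its sections are sheared, so chord lengths and area are preserved, in agreement with \eqref{eq:area-preserved}.

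Next I would determine the interval of $t$ on which $P_t$ stays convex. The moving vertex $v_t=v+t(0,1)$ travels on the vertical line $x=x_v$. Convexity of $P_t$ at $v_{k+1}$ requires $v_t$ to stay on the correct side of the line through $v_{k+1}v_{k+2}$, and convexity at $v_{k-1}$ requires the analogous condition with the line through $v_{k-2}v_{k-1}$. Convexity at $v_t$ itself holds automatically, since $v_t$ lies right of the diagonal. Let $t_+>0$ be the first time $v_t$ reaches the line $v_{k+1}v_{k+2}$, and $t_-<0$ the first time it reaches the line $v_{k-2}v_{k-1}$.

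\emph{Finiteness of $t_\pm$.} The line $v_{k+1}v_{k+2}$ is not vertical: if it were, $v_{k+2}$ would lie on the line $x=x_0$, i.e.\ on the line through $v_{k-1}$ and $v_{k+1}$, which is impossible for distinct vertices of a convex polygon when $m\geq4$. It also meets the line $x=x_v$ on the appropriate side, because going upward along $x=x_v$ one eventually leaves the cone determined by the edge directions. So $t_\pm$ are finite and nonzero, and for $t\in[t_-,t_+]$ each $P_t$ is convex, being the convex hull of the fixed vertices together with $v_t$.

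At $t=t_+$ the vertices $v_t,v_{k+1},v_{k+2}$ are collinear, so $v_{k+1}$ is no longer an extreme point and $P_{t_+}$ has at most $m-1$ vertices. The same holds for $P_{t_-}$ at $v_{k-1}$. \cref{thm:endpoint-comparison} then yields an endpoint $P'\in\{P_{t_-},P_{t_+}\}$ with $\lambda[P']\leq\lambda[P]$ and $|P'|=|P|$, which is \eqref{eq:one-step}.

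The main obstacle is the geometric bookkeeping: verifying that $\eta$ as defined gives exactly the vertex movement, with the rest of $P$ untouched, and that convexity persists on the whole interval with strict vertex loss at both ends. In particular one must check that $P_t$ never degenerates before $t_\pm$. This holds because the vertex count can only drop when a neighbor becomes collinear with its two adjacent vertices, and the only angles of $P_t$ that change are those at $v_{k-1}$, $v_t$, and $v_{k+1}$. The angle at $v_t$ stays below $\pi$ since $v_t$ remains strictly to the right of the diagonal.
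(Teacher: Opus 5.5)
Your proposal is correct and follows the paper's proof. You shear with the same piecewise affine $\eta$: it vanishes left of the vertical diagonal $v_{k-1}v_{k+1}$ and is linear up to the moved vertex, after which you apply \cref{thm:endpoint-comparison}. Your additional bookkeeping on $t_\pm$ (they are finite, have the right signs, and $v_{k\pm1}$ stops being an extreme point at the endpoints) supplies details the paper only asserts. One small caveat is that which endpoint is $t_+$ depends on whether $v_{k+1}$ lies above or below $v_{k-1}$.
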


\begin{proof}
Choose a vertex $B$ of $P$, with neighbors $A$ and $C$. Move $B$ parallel to the segment $\overline{AC}$ and keep all other vertices fixed. Continue in both directions as long as the polygon remains convex. Notice that such movement preserves area and that when $B$ reaches an endpoint, the polygon has $m-1$ vertices. By Theorem \ref{thm:endpoint-comparison}, one of the endpoint polygon $P'$ satisfies $\lambda[P'] \leq \lambda[P]$. To apply Theorem \ref{thm:endpoint-comparison}, we need to describe the map $\eta$ in  \eqref{eq:movement}. Choose coordinates so that
\[
A=(a,\alpha),\qquad C=(a,\gamma),\qquad B=(b,\beta),
\qquad a<b,
\]
and all vertices other than $B$ lie in $\{x\le a\}$.
It suffices to choose
\[
\eta(x)=
\begin{cases}
0, & x\le a,\\[2pt]
\dfrac{x-a}{b-a}, & a<x<b,\\[6pt]
1, & x\ge b.
\end{cases}
\]

The proof is complete.
\end{proof}

Finally, we are ready to prove \cref{thm:main}.

\begin{proof}[Proof of \cref{thm:main}]
By \cref{lem:affine}, we can assume $|\Omega| = 1$. Fix a triangle $T$ of area one.

\medskip
\emph{Case 1. $\Omega$ is a convex polygon.}
Applying \cref{lem:vertex-removal} iteratively
gives a triangle $\Delta $ with $|\Delta|=|\Omega|$ and
$\lambda[\Delta]\leq\lambda[\Omega]$.
By \cref{lem:affine}, we have
\begin{equation}
 \lambda[\Omega] = \lambda[\Omega]|\Omega|^2
 \geq\lambda[\Delta]|\Delta|^2
 =\lambda[T].
 \label{eq:polygon-minimum}
\end{equation}

\medskip
\emph{Case 2. $\Omega$ is a general convex domain.}
By \cite[Theorem~1.8.16]{Sch2014}, we can choose convex
polygons $\{\Omega_k\}_{k=1}^\infty$ such that
$\overline{\Omega_k}$ converges to $\overline{\Omega}$
in the Hausdorff distance and $|\Omega_k|\to|\Omega|$.
By \cite[Proposition~5.8(i)]{Le2018}, we also have
$\lambda[\Omega_k]\to\lambda[\Omega]$. Using \cref{lem:affine}, we then apply Case 1 and pass to the limit, and obtain
\begin{equation}
 \lambda[\Omega] = \lambda[\Omega]|\Omega|^2
 =\lim_{k\to\infty}\lambda[\Omega_k]|\Omega_k|^2
 \geq\lambda[T].
 \label{eq:invariant-main}
\end{equation}
The proof is complete.
\end{proof}

\medskip
\begin{remark}
Our argument also proves the planar centrally symmetric case of Le's conjecture \cite[Conjecture~1.5(ii)]{Le2018}. We call a set $K\subset\R^n$ centrally symmetric if $-x\in K$ whenever $x\in K$.
In \cref{lem:vertex-removal}, take a centrally symmetric polygon with at least six vertices.
Move two opposite vertices at equal speeds in opposite directions, each parallel to its nearest diagonal.
This preserves area and central symmetry. (In more details, let
$P=-P$ be a convex polygon with at least six vertices, and let
$A,B,C$ be consecutive vertices in counterclockwise order. We need to describe the map $\eta$ in  \eqref{eq:movement}. Let $Q$ be the convex hull of the vertices of $P$ other than $B$ and $-B$.
The remaining vertices contain at least two nonparallel antipodal
pairs. Thus $Q$ has nonempty interior, is centrally symmetric, and
contains the origin in its interior. After rotating coordinates, we may write
\[
 A=(a,\alpha),\qquad C=(a,\gamma),\qquad B=(b,\beta),
 \qquad 0<a<b,\quad \alpha<\gamma.
\]
Then $Q\subset\{|x|\le a\}$, and the projection of $\overline P$
onto the $x$-axis is $[-b,b]$. Put $h=b-a$ and define
\[
 \eta(x)=
 \begin{cases}
  (x+a)/h, & -b\le x\le-a,\\
  0,       & -a\le x\le a,\\
  (x-a)/h, & a\le x\le b.
 \end{cases}
\]
Extend $\eta$ constantly beyond $[-b,b]$. This function $\eta$ is odd,
Lipschitz, and piecewise affine, and we can apply Theorem \ref{thm:endpoint-comparison}.)
By \cref{thm:endpoint-comparison}, one endpoint polygon has no larger Monge--Amp\`ere eigenvalue and at least two fewer vertices.
By iteration, the problem reduces to a parallelogram, which has the same Monge--Amp\`ere eigenvalue as a square of the same area by the affine invariance.
Then, an approximation by centrally symmetric polygons gives the following theorem.
\end{remark}

\begin{theorem}[Square minimizes the Monge--Amp\`ere eigenvalue among centrally symmetric bounded planar convex domains of a given area]
Let $\Omega\subset\R^2$ be a centrally symmetric bounded convex domain with nonempty interior, and let $S\subset\R^2$ be a square with area $|S|=|\Omega|$. Then, their Monge--Amp\`ere eigenvalues satisfy
\[
 \lambda[\Omega]\geq\lambda[S].
 \label{eq:main2}
\]
\end{theorem}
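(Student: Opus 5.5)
The plan mirrors the proof of \cref{thm:main}: first treat centrally symmetric polygons, then pass to general centrally symmetric domains by approximation. By \cref{lem:affine} we may normalize $|\Omega|=1$ and fix a square $S$ of area one; every parallelogram of area one is an affine image of $S$ with unit determinant, so it has eigenvalue $\lambda[S]$.

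\emph{Step 1: symmetric polygons.} Let $P=-P$ be a convex polygon. Its vertex count is even, and we induct on it. If $P$ has four vertices, it is a parallelogram, and $\lambda[P]=\lambda[S]$. If $P$ has $m\ge 6$ vertices, we use the odd piecewise affine $\eta$ from the preceding remark, built after rotating so that $A=(a,\alpha)$, $C=(a,\gamma)$, $B=(b,\beta)$ with $0<a<b$, and consider the family $P_t$ in \eqref{eq:movement}.

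\begin{enumerate}
\item Since $\eta$ is odd and $\overline P$ is symmetric, the map $(x,y)\mapsto(x,y+t\eta(x))$ commutes with $z\mapsto -z$. Hence every $P_t$ is centrally symmetric.
\item On $[a,b]$, $\eta$ is affine and vanishes at $x=a$. So the shear fixes $A$ and $C$, translates $B$ vertically, which is parallel to $\overline{AC}$, and fixes every vertex in $\{|x|\le a\}$. By symmetry, $-B$ moves in the opposite direction.
\item Exactly two vertices move, each by an affine motion along the vertical line through it. Convexity is therefore preserved on a maximal closed interval $[t_-,t_+]\ni 0$.
\item At an endpoint, $B$ becomes collinear with $A$ and $C$, or with one of its other neighbors. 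The same happens at $-B$ by symmetry, so the endpoint polygon has at most $m-2$ vertices.
\item The interval is bounded, since $P_t$ stays convex with fixed area and fixed chord lengths. It is also nondegenerate on both sides of $0$, because $B$ is a strict vertex.
\end{enumerate}

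\cref{thm:endpoint-comparison} then gives an endpoint $P'$ that is centrally symmetric, has the same area, has fewer vertices, and satisfies $\lambda[P']\le\lambda[P]$. Iterating reaches a parallelogram, so $\lambda[P]\ge\lambda[S]$.

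\emph{Step 2: approximation.} For a general centrally symmetric $\Omega$, take polygons $\Omega_k$ with $\overline{\Omega_k}\to\overline\Omega$ in the Hausdorff distance, for instance via \cite[Theorem~1.8.16]{Sch2014}. To keep them symmetric, replace $\Omega_k$ by $\tfrac12(\Omega_k-\Omega_k)$, or by the convex hull of $\Omega_k\cup(-\Omega_k)$. Either choice is still a polygon, is symmetric, and still converges to $\overline\Omega$ because $\Omega=-\Omega$. Then \cite[Proposition~5.8(i)]{Le2018} and continuity of area give $\lambda[\Omega]=\lim\lambda[\Omega_k]|\Omega_k|^2\ge\lambda[S]$.

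\emph{Main obstacle.} The delicate step is the geometric bookkeeping in Step 1. We must check that the coordinate choice in the remark is possible, meaning all vertices other than $\pm B$ lie in the slab $\{|x|\le a\}$ once $\overline{AC}$ is vertical. We must also check that the endpoint polygons genuinely lose two vertices rather than degenerating, and that they stay convex up to and including the endpoints. The energy estimate itself is reused verbatim, since \cref{thm:endpoint-comparison} only requires $\eta$ to be Lipschitz and piecewise affine and every $P_t$ to be convex.
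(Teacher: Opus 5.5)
Your proposal is correct and is essentially the paper's own argument from the remark preceding the theorem: the odd shear $\eta$, then \cref{thm:endpoint-comparison} applied to the symmetric family, induction down to a parallelogram, and approximation by symmetrized polygons. Small slips and one deferred check: at an endpoint it is $A$ or $C$ (together with its antipode) that stops being a vertex, since $B_t$ stays on $\{x=b\}$ and can never be collinear with $A$ and $C$, and $t_-<0<t_+$ holds because $A$ and $C$ are strict vertices; the slab condition you flag is immediate, because the line $AC$ separates $B$ from all other vertices and, for $m\geq 6$, the vertex $-A\notin\{A,B,C\}$ lies strictly in $\{x<a\}$, which forces $a>0$.
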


\end{document}